
\documentclass[letterpaper, 10 pt, conference]{ieeeconf}  

\IEEEoverridecommandlockouts                              
\overrideIEEEmargins
\usepackage{amsmath}
\usepackage{mathtools,amssymb,amsfonts,subcaption}

\newtheorem{thm}{\bf Theorem}[section]

\newtheorem{deff}[thm]{\bf Definition}

\newcommand{\KL}{\mathcal{KL}}
\newcommand{\K}{\mathcal{K}}

\title{\LARGE \bf
Stability of Jordan Recurrent Neural Network Estimator
}

\author{Avneet Kaur*, Ruikun Zhou*, Jun Liu and Kirsten Morris
\thanks{This work was supported by the University of Waterloo and NSERC (Canada).}
\thanks{Avneet Kaur, Ruikun Zhou, Jun Liu and Kirsten Morris are with the Faculty of Mathematics, Department of Applied Mathematics, 
        University of Waterloo, 200 University Ave W, Waterloo, Canada 
        {\tt\small a93kaur@uwaterloo.ca, ruikun.zhou@uwaterloo.ca, j.liu@uwaterloo.ca, kmorris@uwaterloo.ca}. }%
}

\begin{document}

\maketitle
\thispagestyle{empty}
\pagestyle{empty}

\begin{abstract}

State estimation refers to determining the states of a dynamical system that starts from a noisy initial condition and evolves under process noise, based on noisy measurements and a known system model. For linear dynamical systems with white Gaussian noises of known mean and variance, Kalman filtering is a well-known method that leads to stable error dynamics for detectable systems. There are some non-optimal extensions to nonlinear systems. Recent work has used neural networks to develop estimators for nonlinear systems that optimize a criterion. Stability of the error dynamics is even more important than optimality. Jordan recurrent neural networks (JRNs) have a structure that mimics that of a dynamical system and are thus appealing for estimator design. We show that a JRN performs better than an extended Kalman filter(EKF) and unscented Kalman filter(UKF) for several examples. The main contribution of this paper is an input-to-state stability analysis of the error dynamics of JRNs. The stability of the error dynamics of several examples is shown. 
\end{abstract}

\section{INTRODUCTION}

State estimation refers to estimating the states of a dynamical system given a model and some measurements; both of which are subject to errors and disturbances.  It differs from system identification where there is no mathematical model. This is needed in some situations. However, often a model for the dynamics, although not perfect is available, and should be used in estimator design. The need for an estimator to run in real-time despite complex dynamics,  inaccurate measurements, incomplete information about the initial state, and errors in the model all mean that estimation, particularly of nonlinear systems, is challenging. 

Several approaches to state estimation exist dating from the early 1960s. Kalman filtering (KF) is a well known and widely used method for linear systems. If the disturbances on the model and the measurements are independent Gaussian noises, with known means and covariances then the Kalman filter yields an estimate of minimum error covariance. For nonlinear systems, several extensions of the Kalman filter are commonly used. In an extended Kalman filter (EKF), the system is linearised at each time-step and a Kalman filter for the linearized system is used \cite[e.g.]{dansimon}. It is widely used due to its simplicity and low computational overhead. Another extension is the unscented Kalman filter (UKF). This method relies on choosing sigma points to propagate state mean and covariance \cite[e.g.]{dansimon}. The UKF typically copes better with highly nonlinear systems, but is computationally expensive for high-order systems. 

Machine learning approaches are increasingly common. A deep long short-term memory network has been used to approximate the predicted and filtered state using  observations in \cite{gao_2019}. As for optimal control, a Hamilton-Jacobi-Bellman partial differential equation needs to be solved to find the minimum variance (or maximum likelihood) estimator. Its solution is very difficult for nonlinear systems since the number of independent variables equals the order of the system  (the curse of dimensionality). In \cite{kunisch}, a neural network is used to find solutions to the Hamilton-Jacobi-Bellman equation, assuming that a smooth solution exists.  In \cite{dl_based_observer}, a non-optimal design for discrete-time nonlinear systems is implemented using unsupervised learning. In another work, \cite{gencay1997nonlinear} compared the method of least squares, Elman recurrent neural network (ERN) and feedforward neural network (FNN) for noisy time series data with fully observable states and concluded that ERN provides the best estimation. Other papers that explore RNNs for state estimation include \cite{xie_deep_2024, jin2021new, wang2017state, chenna2004state}.
Recurrent neural networks have been shown to work well for system identification\cite{park2020analysis}. Jordan recurrent networks (JRNs) have been used for system identification by several researchers and have been shown to work as well as ERNs \cite{kasiran2012mobile, wu2019time, kuan_convergence_1994}. In \cite{KaurMorris} the structure of a Jordan recurrent network was extended to an LSTM and compared to an Elman long short-term memory network for estimator design. The training time considerably decreased when using JLSTM for state estimation. This is consistent with the fact that the structure of a Jordan RNN is closer to that of a dynamical system than that of a Elman RNN \cite{ian_goodfellow}. 

Just as stability is critical for a control system, in the absence of disturbances, the estimated state must converge to the true state. Small disturbances should increase the error by a small amount. In other words, the error dynamics must be input-to-state stable (ISS) \cite{Sontag1995,SontagWang1997}. It is known that for a linear system, if the measurements satisfy the weak assumption of detectability, the error dynamics are stable. It was recently shown \cite{AfsharGermMorris} that if a nonlinear system is locally detectable, then the error dynamics of an EKF are locally ISS. If the system is uniformly observable and can be put globally into the normal form, along with some other technical assumptions on the system dynamics then the EKF error dynamics are stable \cite[sec. 2.4]{GauthierKupka}.
For a discrete-time UKF satisfying certain assumptions, the estimation error remains bounded \cite{Xiong2006}. 
The stability of errors of RNNs as state estimators has not been well explored. However, there are several approaches to establishing the stability of RNNs by finding a  Lyapunov function \cite{liu2025physics, knight2008stability,zhou2022neural}. This suggested extending this work to show stability of error dynamics. The issue with ERNs in this context is that proving the stability of the estimator is difficult due to its dependence on the hidden neuron state at the previous time-step. This is not the case for a JRN. We therefore design an estimator using JRNs and utilize an input-to-state stability (ISS) approach to analyze the stability of the error dynamics. Finding Lyapunov functions is generally challenging. Several papers \cite{chang2019neural, zhou2022neural} have shown that Lyapunov functions can be attained and represented by expressions of the compositional structure of the neural network, the correctness being guaranteed by satisfiability modulo theories (SMT) solvers. We use this approach to establish a condition under which the error dynamics are stable if the original system is stable. Estimation of unstable systems is generally conducted in combination with a stabilizing feedback; our approach assumes that any necessary stabilization has been performed. 

The use of Jordan recurrent neural network for state estimation is described in Section \ref{sec:prelim}. Then in Section \ref{sec:stability} an ISS approach is used to provide a characterization of a Lyapunov function for the error dynamics. Data generation and training are described in Section \ref{sec:data_gen}. The approach is illustrated in Section \ref{sec:numerical_results} with three different discrete-time systems.


\begin{figure}[b]
    \centering
    \includegraphics[scale=0.25]{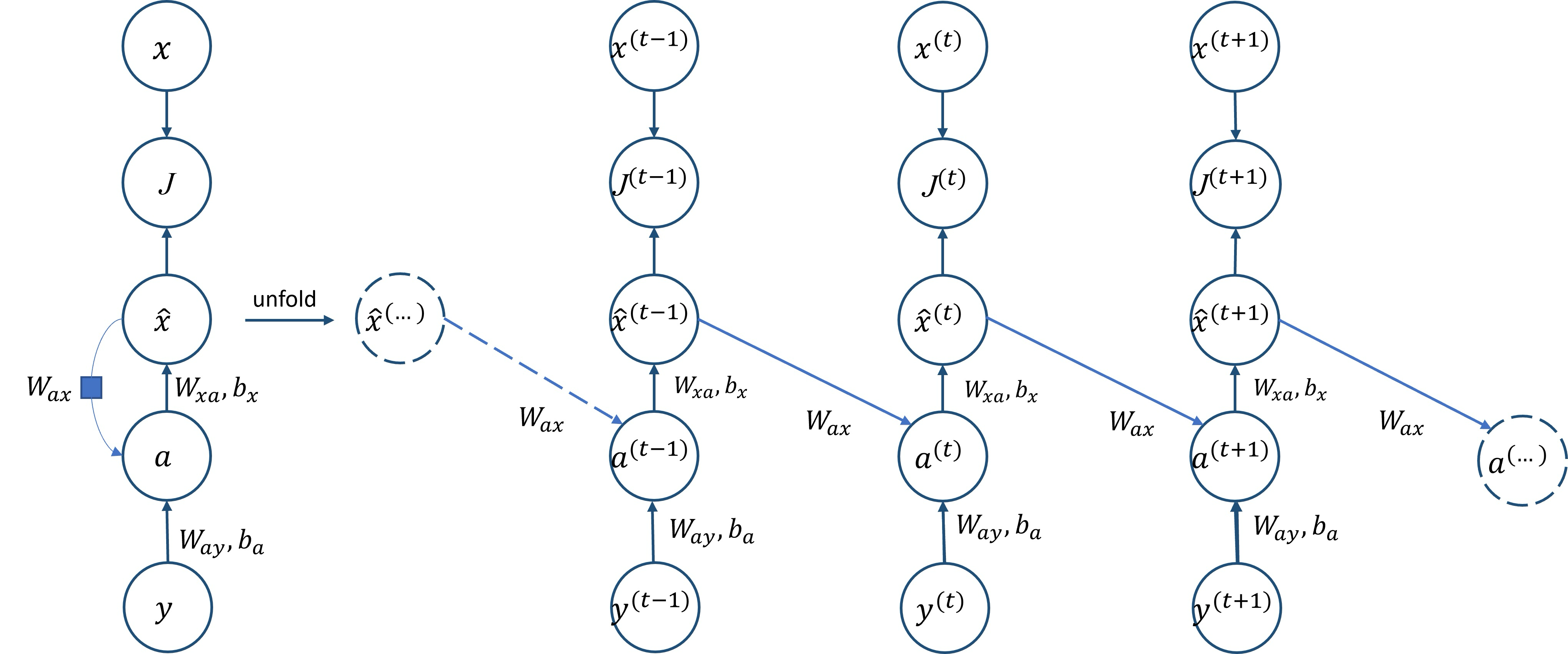}
    \caption{Jordan recurrent network (JRN) for state estimation showing output-to-hidden recurrent connections.}
    \label{fig:jordanRN}
\end{figure}
\section{Jordan recurrent neural network for state estimation}
\label{sec:prelim}
Consider a discrete-time nonlinear system 
\begin{align}
    \begin{split}
        x^{(t+1)}&=f(x^{(t)})+w^{(t+1)},  \\
        y^{(t+1)}&=h(x^{(t+1)})+v^{(t+1)}, \\
        x^{(0)}&=x^0+\bar{x}^0, 
        \label{eqn:noisy_disc_state_space_sys}
    \end{split}
\end{align}
where $x^{(t)} \in \mathcal{X} \subseteq \mathbb{R}^n$ is the state vector at time-step $t$, $w^{(t+1)} \in \mathbb{R}^n$ is the process noise vector at time-step $t+1$, $y^{(t+1)}\in \mathcal{Y} \subseteq \mathbb{R}^m$ is the measurement vector at time-step $t+1$, $v^{(t+1)}\in \mathbb{R}^m$ is the measurement noise vector at time-step $t+1$, $x^0 \in \mathbb{R}^n$ is the true initial condition and $\bar{x}^0 \in \mathbb{R}^n$ is the initial condition noise. Here, $\mathcal{X}$ and $\mathcal{Y}$ are the state space and measurement space, respectively, and $n \in \mathbb{N}$ is the total number of states and $m \in \mathbb{N}$ is the number of noisy measurements. The functions $f$ and $h$ are assumed to be sufficiently smooth.

Our aim is to estimate the state vector $x^{(t+1)}$ by $\hat{x}^{(t+1)}$ based on the measurement vector $y^{(t+1)}$ and the previous state vector estimate $\hat{x}^{(t)}.$ 

A JRN has recurrent connections from the output of the previous layer to the hidden state of the next layer. Thus, we use $y^{(t+1)}$  and $\hat{x}^{(t+1)}$ as inputs and outputs, respectively, of the JRN. The forward propagation of the network is
\begin{align}
    \begin{split}
        a^{(t+1)}&=\sigma(W_{ay} y^{(t+1)}+W_{ax} \hat{x}^{(t)}), \\
        \hat{x}^{(t+1)}&=W_{xa} a^{(t+1)},
        \label{eqn:proposed_jrn}
    \end{split}
\end{align}
where $\sigma$ is the activation function and $\hat{x}^{(t+1)}$ is the estimated state vector at time-step $t+1$. The hidden layer vector at time-step $t+1$ is given by $a^{(t+1)}.$ The weights and biases of the network are represented by $W_{ay}, W_{ax}$ and $W_{xa}$. The network is depicted in Fig. \ref{fig:jordanRN}. 

This is different from an ERN because the recurrent connections for an ERN are from the previous hidden layer $a^{(t)}$ to the next hidden layer $a^{(t+1)}$, whereas for a JRN, they are from the previous output $\hat{x}^{(t)}$ to next hidden layer $a^{(t+1)}.$ Thus, for an ERN, the term $W_{ax} \hat{x}^{(t)}$ would be replaced by $W_{aa} {a}^{(t)}$ where $W_{aa}$ is a weight matrix. Note that we are considering a bias-free network. 

\section{Input-to-state stability analysis}
\label{sec:stability}
To analyze the stability of the neural estimator, we first consider the case with no process and measurement noise in~\eqref{eqn:noisy_disc_state_space_sys}, i.e., $w^{(t+1)} = v^{(t+1)} = 0$ for all $t$, and $\bar{x}^0 = 0$: 
\begin{align}
    \begin{split}
        x^{(t+1)}&=f(x^{(t)}), \\
        y^{(t+1)}&=h(x^{(t+1)}).
        \label{eqn:disc_state_space_sys}
    \end{split}
\end{align}

The solution of system~\eqref{eqn:disc_state_space_sys} is denoted by $x^{(t)}(\xi)$ at time step $t$ with initial condition $x^{(0)} = \xi \in \mathcal{X}$. We define the error term as $e^{(t+1)} = x^{(t+1)}-\hat{x}^{(t+1)}$. Then, 
\begin{align}
    \begin{split}
        e^{(t+1)} &=  x^{(t+1)} - W_{xa} \sigma(W_{ay} y^{(t+1)}+W_{ax} \hat{x}^{(t)}) \\
       &= f(x^{(t)}) - W_{xa} \sigma(W_{ay} h(f(x^{(t)})) \\
       & \quad +W_{ax} x^{(t)} - W_{ax} e^{(t)}).
    \end{split}
\end{align}
Obviously, $e^{(t+1)}$ is a function of $e^{(t)}$ and $x^{(t)}$, and we call it the error system, denoted as follows,
\begin{equation}
    e^{(t+1)} \vcentcolon= g(e^{(t)}, x^{(t)}),
    \label{eqn:error_dynamics}
\end{equation}
where $e^{(t)} \in \mathcal{E} \subseteq \mathbb{R}^n$, and $\mathcal{E}$ is the space for the error term. In a similar manner, we denote the solution to system~\eqref{eqn:error_dynamics} as $e^{(t)}(\eta, x)$ with the initial condition  $e^{(0)} = \eta \in \mathcal{E}$.

We introduce  Lyapunov functions for the error dynamics, regarding $x^{(t)} $ as the input in \eqref{eqn:error_dynamics}.
\begin{deff}
    A continuous function $\alpha: \mathbb{R}_{\geq0} \rightarrow \mathbb{R}_{\geq0}$ is a $\K$-function if it is strictly increasing and $\alpha(0) = 0$. It is a $\K_{\infty}$-function if it is a $\K$-function and $\alpha(r
    ) \rightarrow \infty$ as $r \rightarrow \infty.$
\end{deff}
\begin{deff}
    A continuous function $\beta: \mathbb{R}_{\geq0} \times \mathbb{R}_{\geq0} \rightarrow \mathbb{R}_{\geq0}$ is a $\KL$-function if, for each $t$, $\beta(\cdot, t)$ is a $\K$-function with respect to $r$, and for each $r$, $\beta(r,\cdot)$ is decreasing with respect to $t$, and $\beta(r,t) \rightarrow 0$ as $t \rightarrow \infty.$
\end{deff}

\begin{deff}
    The origin is globally asymptotically stable for \eqref{eqn:disc_state_space_sys} if, for each $\varepsilon > 0$, there is a $\delta = \delta(\varepsilon) > 0$ such that 
    \begin{equation}
        \|\xi \| < \delta \implies \|x^{(t)} \| < \varepsilon, \forall t \geq 0, \text{and} \lim_{t \rightarrow \infty}  x^{(t)} = 0.
    \end{equation}
    In addition, there exists a $\KL$-function $\beta$ such that 
    \begin{equation}
        \| x^{(t)}(\xi) \| \leq \beta(\|\xi\|, t) \quad \forall \xi \in \mathcal{X}, \forall t \in \mathbf{Z}_+.
    \end{equation}
\end{deff}

\begin{deff} \cite{jiang2001input}
The discrete-time system~\eqref{eqn:error_dynamics} is input-to-state stable (ISS) if there exists a $\KL$-function $\beta$ and a $\K$-function $\gamma$, such that for each $t \in \mathbf{Z}_+$, 
\begin{equation}
    |e^{(t)}(\eta, x)| \leq \beta( \|\eta \|, t) + \gamma(\|x\|), 
\end{equation}
for all $\eta \in \mathcal{E}$ and for all $x \in \mathcal{X}$.
\end{deff}

\begin{deff} \cite{jiang2001input}
    A continuous function $V: \mathbb{R}^n \rightarrow \mathbb{R}_{\geq0}$ is called an ISS-Lyapunov function for the discrete-time system~\eqref{eqn:error_dynamics} if there exist $\K_{\infty}$-functions $\alpha_1, \alpha_2, \alpha_3$, and a $\K$-function $\gamma$ such that
    \begin{equation}
        \alpha_1(\|e^{(t)}\|) \leq V(e^{(t)}) \leq \alpha_2(\|e^{(t)}\|),\quad \forall e^{(t)} \in \mathcal{E},
        \label{eqn:condition_positive}
    \end{equation}
    and
    \begin{align}
        V(g(e^{(t)}, x^{(t)})) - V(e^{(t)}) & \leq -\alpha_3(\|e^{(t)}\|) + \gamma(\|x^{(t)}\|) \notag\\
        & \forall e^{(t)} \in \mathcal{E}, \forall x^{(t)} \in \mathcal{X}.    \label{eqn:condition_negative}
    \end{align}
    \label{Def:ISS}
\end{deff}

\begin{thm}
    If the error system~\eqref{eqn:error_dynamics} with $x^{(t)}$ as input is ISS and the origin of the discrete-time system~\eqref{eqn:disc_state_space_sys} is globally asymptotically stable, then the origin of the cascade system~\eqref{eqn:disc_state_space_sys} and~\eqref{eqn:error_dynamics} is globally asymptotically stable.
    \label{thm:stable}
\end{thm}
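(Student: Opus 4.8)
The plan is to verify directly the two defining properties of global asymptotic stability for the joint state $(x^{(t)}, e^{(t)})$: uniform (Lyapunov) stability of the origin and global attractivity. Both will follow by combining the $\KL$-bound guaranteed by global asymptotic stability of the $x$-subsystem with the ISS estimate for the error system, so the first thing I would record is the two estimates I am entitled to use. Global asymptotic stability of~\eqref{eqn:disc_state_space_sys} supplies a $\KL$-function $\beta_1$ with $\|x^{(t)}(\xi)\| \le \beta_1(\|\xi\|, t)$, and ISS of~\eqref{eqn:error_dynamics} supplies a $\KL$-function $\beta_2$ and a $\K$-function $\gamma$ with $\|e^{(t)}(\eta, x)\| \le \beta_2(\|\eta\|, t) + \gamma(\|x\|)$, where $\|x\|$ denotes the supremum norm of the input sequence $\{x^{(s)}\}$.

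For Lyapunov stability, fix $\varepsilon > 0$. Since $\beta_1$ is decreasing in its second argument, $\sup_{s \ge 0}\|x^{(s)}(\xi)\| \le \beta_1(\|\xi\|, 0)$, and likewise $\beta_2(\|\eta\|, t) \le \beta_2(\|\eta\|, 0)$ for all $t$. Substituting these into the two estimates bounds $\|x^{(t)}\|$ and $\|e^{(t)}\|$, and hence $\|(x^{(t)}, e^{(t)})\|$, by a $\K$-function of $\|(\xi, \eta)\|$ that is independent of $t$; choosing $\delta$ so that this $\K$-function stays below $\varepsilon$ delivers the required implication.

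Attractivity is where I expect the main obstacle. I want to use $x^{(t)} \to 0$ to suppress the gain term, but $\gamma(\|x\|)$ reads the entire input history, including the possibly large transient near $t = 0$, so a naive substitution does not drive $e^{(t)}$ to zero. The resolution is to exploit time-invariance of the cascade through the causality (cocycle) property: reinitializing the error system at any time $T$ gives $\|e^{(t)}\| \le \beta_2(\|e^{(T)}\|, t - T) + \gamma\bigl(\sup_{s \ge T}\|x^{(s)}\|\bigr)$ for all $t \ge T$. Given $\epsilon > 0$, I would first choose $T$ large enough that $\gamma\bigl(\sup_{s \ge T}\|x^{(s)}\|\bigr) \le \epsilon/2$, which is possible because $x^{(s)} \to 0$ and $\gamma$ is continuous with $\gamma(0) = 0$, and then choose $t$ large enough that $\beta_2(\|e^{(T)}\|, t - T) \le \epsilon/2$. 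Hence $\|e^{(t)}\| \le \epsilon$ for all large $t$, so $e^{(t)} \to 0$, and combined with $x^{(t)} \to 0$ this yields global attractivity of the cascade.

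Finally, uniform stability together with global attractivity for an autonomous discrete-time system is equivalent to the existence of a single $\KL$-bound on $\|(x^{(t)}, e^{(t)})\|$, which is precisely the conclusion demanded by the definition of global asymptotic stability; I would either invoke this standard equivalence or assemble the $\KL$-bound explicitly from $\beta_1$, $\beta_2$, and $\gamma$. The genuinely delicate point throughout is the restarting estimate in the attractivity argument, which rests on the cascade being time-invariant so that the ISS bound may be reinitialized at time $T$ against only the tail supremum $\sup_{s \ge T}\|x^{(s)}\|$ of the input.
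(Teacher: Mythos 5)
Your proof is correct, but it takes a genuinely different---and in fact more careful---route than the paper. The paper's own proof is a one-line substitution: it writes the ISS estimate with the gain evaluated at the \emph{instantaneous} input, $\|e^{(t)}\| \leq \beta_2(\|\eta\|,t) + \gamma(\|x^{(t)}\|)$, plugs in $\|x^{(t)}\| \leq \beta_1(\|\xi\|,t)$, and immediately obtains an explicit $\KL$ bound $\beta(\cdot,\cdot) = \beta_1(\cdot,\cdot) + \beta_2(\cdot,\cdot) + \gamma(\beta_1(\cdot,\cdot))$ on the concatenated state. This is short and produces an explicit decay estimate, but it silently strengthens the ISS hypothesis: under the cited sup-norm definition of ISS, the gain term reads the whole input history, and a bound in terms of $\gamma(\|x^{(t)}\|)$ alone is not available (a large input at time $t-1$ followed by $x^{(t)}=0$ already defeats it). This is precisely the obstacle you flag at the start of your attractivity argument. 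Your resolution---restarting the ISS estimate at time $T$ via time-invariance and causality, suppressing the gain by the tail supremum $\sup_{s \geq T}\|x^{(s)}\|$, and then sending $t - T \to \infty$---is the classical cascade argument, and it is rigorous under the standard definition; your separate uniform-stability step is also sound. The trade-off is clear: the paper's route buys an explicit $\KL$-function at the price of an intermediate inequality that the stated ISS definition does not justify, while your route buys correctness under the weaker, standard hypothesis at the price of an extra step (uniform stability plus global attractivity $\Rightarrow$ existence of a $\KL$ bound, a standard equivalence for autonomous discrete-time systems) to recover the $\KL$ formulation the paper's definition of global asymptotic stability demands; if you want your argument to match the theorem statement literally, either invoke that equivalence explicitly with a citation or assemble the $\KL$ bound from $\beta_1$, $\beta_2$, $\gamma$ as you suggest.
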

\begin{proof}
The solutions of~\eqref{eqn:disc_state_space_sys} and~\eqref{eqn:error_dynamics} satisfy:
\begin{align*}
    \| x^{(t)}(\xi) \| & \leq \beta_1(\|\xi\|, t),\\
  \| e^{(t)}(\eta, x)\| & \leq \beta_2(\|\eta\|, t) + \gamma(\|x^{(t)}\|), 
\end{align*}
where $\beta_1, \beta_2$ are $\KL$-functions. Thus,
\begin{equation}
    \| e^{(t)}(\eta, x)\| \leq \beta_2(\|\eta\|, t) + \gamma(\beta_1(\|\xi\|, t)). 
\end{equation}
Let $s^{(t)}$ denote the concatenation of the state $x^{(t)}$ and the error state $e^{(t)}$, and $\zeta$ denote the origin of this cascade system. We have $\|x^{(t)} \| \leq \| s^{(t)} \|$ and $\|e^{(t)} \| \leq \| s^{(t)} \|$, and $ \| s^{(t)} \| \leq \| x^{(t)} \| + \| e^{(t)} \|$ (because for any 2 non-negative real numbers $a$ and $b, \sqrt{a^2+b^2}\leq a+b$). Defining $\beta(\cdot, \cdot) = \beta_1(\cdot, \cdot) + \beta_2(\cdot, \cdot) + \gamma(\beta_1(\cdot, \cdot)) $, this yields
 \begin{equation}
        \| s^{(t)}(\zeta) \| \leq \beta(\|\zeta\|, t) \quad \forall t \in \mathbf{Z}_+.
    \end{equation}
It can be easily verified that $\beta$ is a $\KL$-function.
\end{proof}

Two different approaches were used for finding the Lyapunov function, depending on whether the system is linear or nonlinear.

{\bf Linear Systems.}  Consider a linear discrete-time system defined by, for matrices $A$ and $H$,
\begin{align}
    \begin{split}
        x^{(t+1)}&=A x^{(t)},  \\
        y^{(t+1)}&=H x^{(t+1)}. 
        \label{eqn:linear_disc_state_space_sys}
    \end{split} 
\end{align}

By using the identity function as the activation function $\sigma$ in \eqref{eqn:proposed_jrn}, the error dynamics of the linear state estimator can be written as
\begin{align}
    \begin{split}
        e^{(t+1)} &=Ax^{(t)}-W_{xa}(W_{ay} y^{(t+1)}+W_{ax} \hat{x}^{(t)})  \\
        &=Ax^{(t)}-W_{xa}(W_{ay}Hx^{(t+1)} + W_{ax} (x^{(t)} - e^{(t)}))\\
        &=(A-W_{xa}W_{ay}HA -W_{xa}W_{ax}) x^{(t)} + W_{xa}W_{ax}e^{(t)}.
    \end{split}
\end{align}
Defining $\mathcal{A}  = W_{xa}W_{ax}$ and $\mathcal{B} = A-W_{xa}W_{ay}HA -W_{xa}W_{ax}$,  the error system is a linear system with $x^{(t)}$ as the input:
\begin{equation}
    e^{(t+1)} = \mathcal{A} e^{(t)} + \mathcal{B} x^{(t)} .
\end{equation}
If this discrete-time system is ISS then there is a quadratic Lyapunov function
\begin{equation}
    V(e) = e^T P e,
\end{equation}
where $P$ is a positive definite matrix obtained by solving 
\begin{equation}
    \mathcal{A}^T P \mathcal{A} - \mathcal{A} + Q = 0.
\end{equation}
Here, $Q$ is a symmetric positive definite matrix. In this case, it is easy to show that both properties in Def.~\ref{Def:ISS} are satisfied with $\alpha_1 (x) = \lambda_{min}(P)x^2$, $\alpha_2 (x) = \lambda_{max}(P)x^2$, $\alpha_3 (x) =\frac{1}{2} \lambda_{min}(Q)x^2$, and $\gamma(x) = (\frac{2|\mathcal{A}^T P \mathcal{B}|^2}{\lambda_{min}(Q)} + |\mathcal{B}^T P \mathcal{B}|^2)x^2$ ~\cite{jiang2001input}.

{\bf Nonlinear systems. }
For nonlinear systems,  a counterexample-guided method with verification provided by SMT solvers was used to synthesize the ISS-Lyapunov functions for the error system~\eqref{eqn:error_dynamics}. 

Inspired by \cite{abate2021fossil}, we use a one-hidden layer feed-forward neural network with zero bias terms for all layers to learn a Lyapunov function of the following form
\begin{equation}
    V(e;\theta) = \sigma(W_2 \sigma(W_1 e )),
\end{equation}
where $W_1$ and $W_2$ are the weights for the hidden layer and output layer respectively, $\theta$ denotes all the hyper-parameters, $\sigma$ is the activation function, $e$ is the input vector, which is the shorthand notation for the error state at time $t$. Similarly, $x$ denotes the state at time $t$. 

We use the square function as the activation function $\sigma$ in this network, which results in sum-of-squares (SOS)-like quadratic Lyapunov functions. Then $V(0) = 0$. As a result, a valid ISS Lyapunov function is attained when properties~\eqref{eqn:condition_positive} and ~\eqref{eqn:condition_negative} are satisfied. Its falsification constraints can then be written as a first-order logic formula over the real numbers. This yields 
\begin{align}
    \begin{split}
        & \left(\sum_{i=1}^{n} e_{i}^{2} \leq r_e  \vee \sum_{i=1}^{n} x_{i}^{2} \leq r_x \right) \wedge \\
& \bigg( (V(x) - \alpha_1(|e|) \leq 0) \vee (V(x) - \alpha_2(|e|) \geq 0) \vee \\
& (V(g(e, x)) - V(e) + \alpha_3(|e|) - \gamma(|x|) \geq 0)
\bigg),
\label{SMT}
    \end{split}
\end{align}
where, $r_e$ and $r_x$ are the radii of the so-called valid region for the error state and the state respectively, on which we verify the condition using SMT solvers. When the SMT solver returns \verb|UNSAT|, a valid ISS Lyapunov function is obtained. Otherwise, it produces counterexamples that can be added to the training dataset of the neural network for finding Lyapunov function candidates. Due to the nature of the SMT solvers, we typically need to exclude a small region around the origin when verifying the falsification constraints. Around the origin, the linearization of the nonlinear system dominates, and we  implement the method for the linearized model,  described above,  to provide stability guarantees. We refer the readers to~\cite{liu2025physics} for a detailed  proof and algorithm.

The loss function is consistent with the falsification constraints: 
\begin{align}
 L(\theta)&= \frac{1}{MN} \sum_{j=1}^{M} \sum_{i=1}^{N} \max \bigg( 0, V_{\theta}(g(e_i, x_j)) - V_{\theta}(e_i) \notag\\
 & + \alpha_3(|e_i|) - \gamma(|x_j|) \bigg) + \max \bigg( 0, V_{\theta}(e_i) - \alpha_2(|e_i|) \bigg) \notag\\
 & + \max \bigg( 0,  \alpha_1(|e_i|) -V_{\theta}(e_i) \bigg).
     \label{eqn:cost}
\end{align} 
This is known as the positive penalty for the violation of conditions in Definition~\ref{Def:ISS}.


\section{Implementation}
\label{sec:data_gen}
The datasets are obtained by discretizing common ordinary differential equations. We use a zero order hold discretization for linear systems. For nonlinear continuous-time state space systems, we use the RK-45 discretization using Python's $scipy.integrate.solve\_ivp$ function. The initial condition is considered to be Gaussian with mean sampled uniformly from the interval $[-1,1] \times [-1,1] \subset \mathbb{R} \times \mathbb{R}$ for each sequence and known covariance. Process and measurement noises are assumed to be Gaussian with zero mean and known covariance. For the systems under consideration, the above 3 covariance matrices are assumed to be $0.01 \times I$ where $I$ is an identity matrix of appropriate dimensions. Also, knowledge of the functions $f$ and $h$ in \eqref{eqn:noisy_disc_state_space_sys} is used to generate data. 

For the linear system, we consider a total of $100$ sequences and $200$ for non-linear systems. The data set is divided into three parts: training, validation and testing in the ratio $80:10:10$ sequences. Each of the generated sequences is independent of the other sequences in the dataset. 

A custom network with forward propagation as in \eqref{eqn:proposed_jrn} is implemented. The weight matrix $W_{ay}$ is initialized using Xavier uniform distribution while weight matrices $W_{ya}$ and $W_{xa}$ are initialized to be (semi) orthogonal matrices using \textit{Pytorch}'s $torch.nn.init$ module. The function $\sigma(z)$ is chosen to be equal to $z$ for the linear system and $\tanh(z)$ for the nonlinear systems. The backward propagation is implemented using \textit{PyTorch}'s $backward()$ function. We consider the mean squared error loss function at time-step $t+1$ for each sequence 
\begin{equation}
    J_{i}^{(t+1)}(\phi)=\frac{1}{n}\sum_{i=1}^{n}{(x^{(t+1)}_{i}-\hat{x}^{(t+1)}_{i,\phi})}^{2}, 
\end{equation}
where $1\leq i\leq n$ and $n$ is the number of states, $x^{(t+1)}_{i}$ represents the true value of state $x_i$ at time-step $t+1$ and $\hat{x}^{(t+1)}_{i,\phi}$ represents the estimated value of state $x_i$ at time-step $t+1$ by the JRN where $\phi$ denotes the weights of the network. 

Hyperparameter tuning was performed for all examples individually. \textit{Adam} optimization is used for training the network. The optimal learning rate is chosen based on the network's performance on validation dataset over a range of learning rates varying from $10^{-1}$ to $10^{-4}$. A batch size of $40$ and a hidden unit size of $50$ are considered for each example in Section~\ref{sec:numerical_results}. Early stopping with a fixed patience value to decide the number of epochs needed is used. In each epoch, network's performance on validation data is used as the measure to decide whether to proceed or keep training for another epoch. Maximum number of epochs is set to $600$ in case early stopping is not reached. For each lower validation loss value than the previous, the model is saved and after training reloaded to the one corresponding to the least validation loss. 

We test our estimators for 3 different systems in Section~\ref{sec:numerical_results}. We compare the results with KF for the linear system as it is an optimal filter for systems with additive white Gaussian noise. For nonlinear systems with additive white Gaussian noise, the best filters are considered to be EKF and UKF. Hence, we compare our nonlinear systems results with them. Standard implementations of these 3 filters are used, see for example \cite{dansimon}. For EKF, the equilibrium point around which the systems are linearized is considered to be the origin. 

We  compare JRN, EKF and UKF graphically using average error at time-step $t$ over all features and all test sequences
\begin{equation}
    \text{Error}(t)=\frac{1}{m_{test} n} \sum_{k=1}^{m_{test}} \sum_{i=1}^{n} (x_i^{(t)[k]}-\hat{x}_i^{(t)[k]})^2,
\end{equation}
where $m_{test}$ is the number of test sequences, $n$ is the number of states, $x_i^{(t)[k]}$ represents the true value of state $x_{i}$ at time-step $t$ for the $k^{th}$ sequence and $\hat{x}_i^{(t)[k]}$ represents the estimated value of state $x_{i}$ at time-step $t$ for the $k^{th}$ sequence.
We also use root mean square error (RMSE) to compare different methods. It is calculated as
\begin{equation}
    \text{RMSE}=\sqrt{\frac{1}{m_{test} n T} \sum_{k=1}^{m_{test}} \sum_{j=1}^{T} \sum_{i=1}^{n} (x_i^{(j)[k]}-\hat{x}_i^{(j)[k]})^2},
\end{equation}
where $m_{test}$ is the number of test sequences, $T$ is the number of time-steps in each sequence, $n$ is the number of states, $x_{i}^{(j)[k]}$ represents the true value of state $x_{i}$ at time-step $j$ for the $k^{th}$ sequence and $\hat{x}_{i}^{(j)[k]}$ represents the estimated value of state $x_{i}$ at time-step $j$ for the $k^{th}$ sequence.

\begin{figure*}
    \centering
    \begin{subfigure}{0.30\textwidth}
        \includegraphics[width=\linewidth]{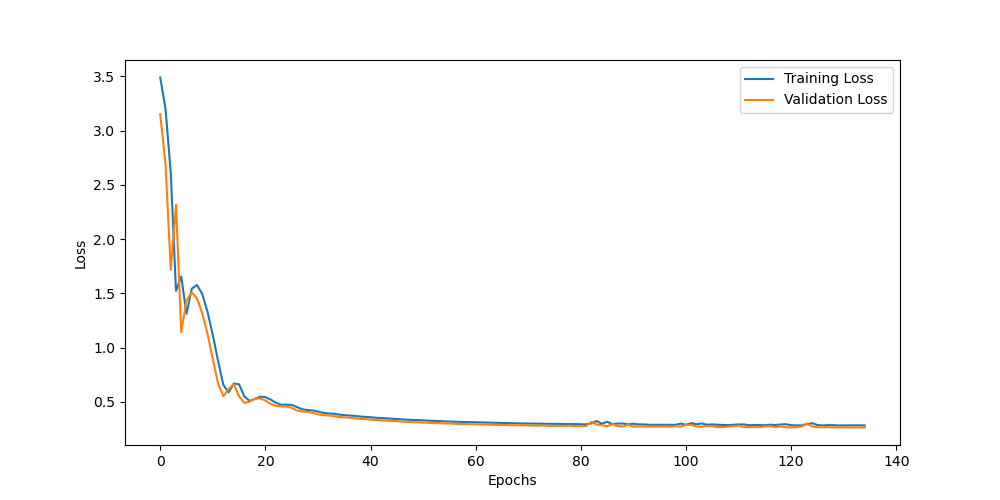}
        \caption{}
        \label{fig:loss_ms}
    \end{subfigure}
    \hfill
    \begin{subfigure}{0.30\textwidth}
        \includegraphics[width=\linewidth]{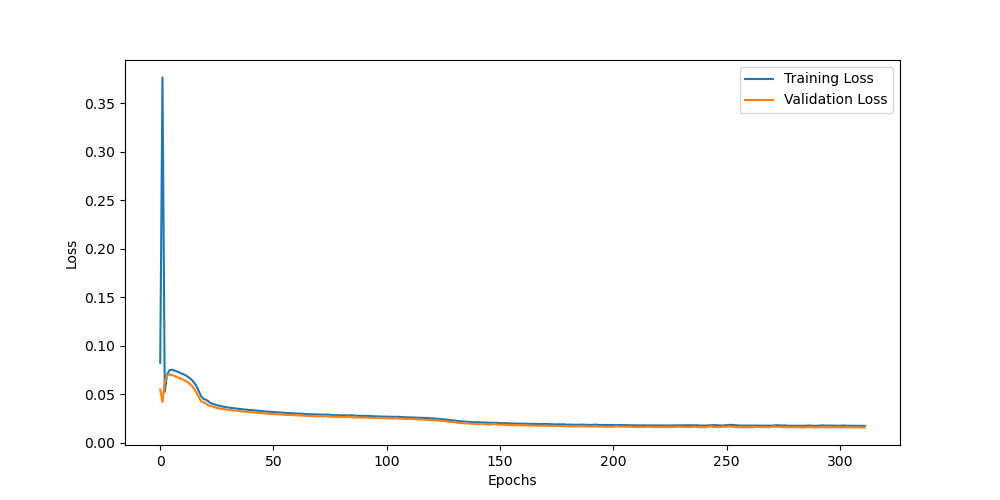}
        \caption{}
        \label{fig:loss_dp}
    \end{subfigure}
    \hfill
    \begin{subfigure}{0.30\textwidth}
        \includegraphics[width=\linewidth]{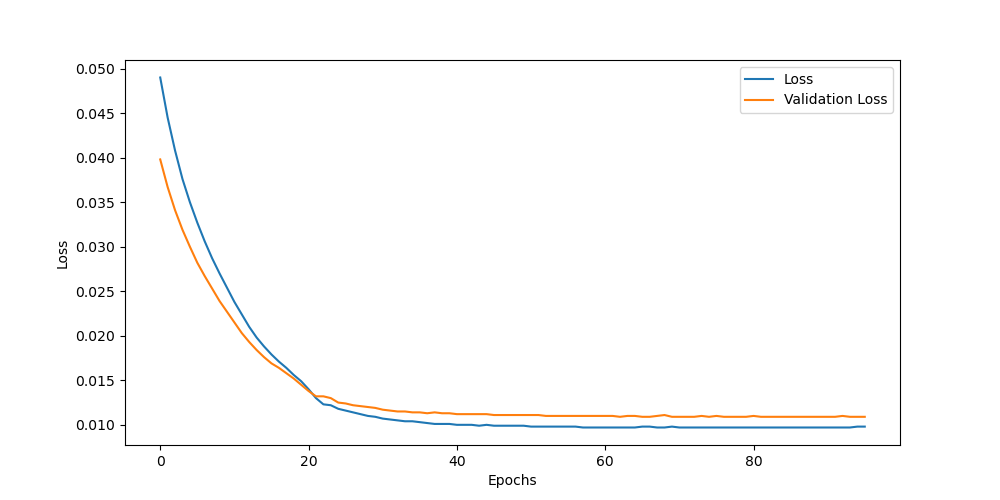}
        \caption{}
        \label{fig:loss_rvp}
    \end{subfigure}
    \hfill
    \begin{subfigure}{0.30\textwidth}
        \includegraphics[width=\linewidth]{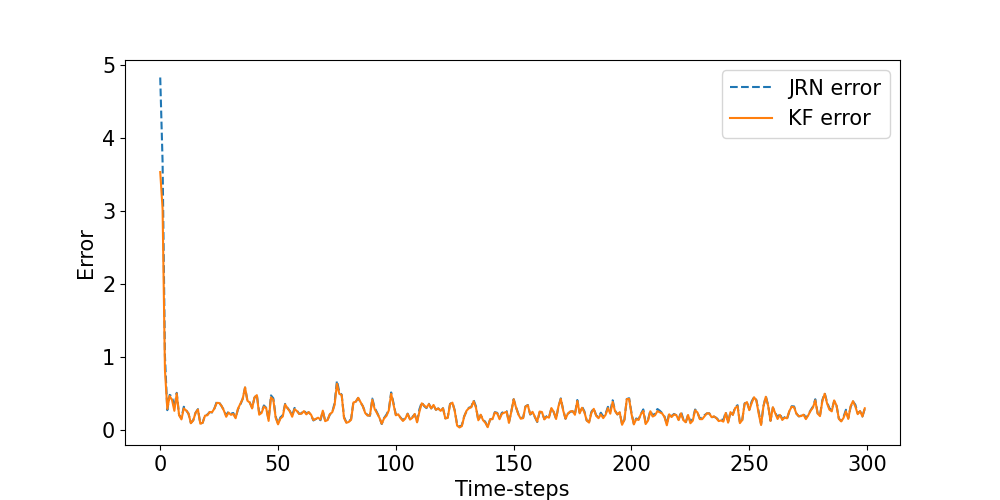}
    \caption{}
    \label{fig:errors_ms}
    \end{subfigure}
    \hfill
    \begin{subfigure}{0.30\textwidth}
        \includegraphics[width=\linewidth]{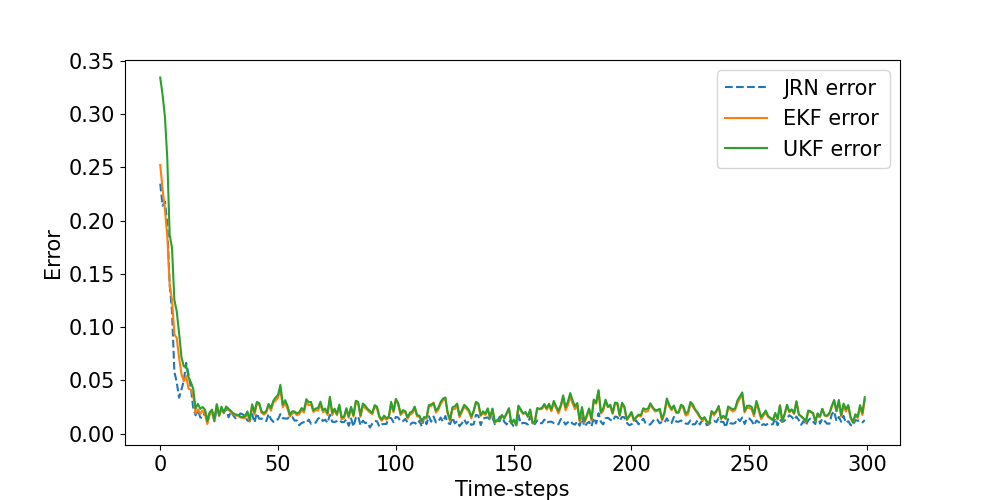}
        \caption{}
        \label{fig:errors_dp}
    \end{subfigure}
    \hfill
    \begin{subfigure}{0.30\textwidth}
        \includegraphics[width=\linewidth]{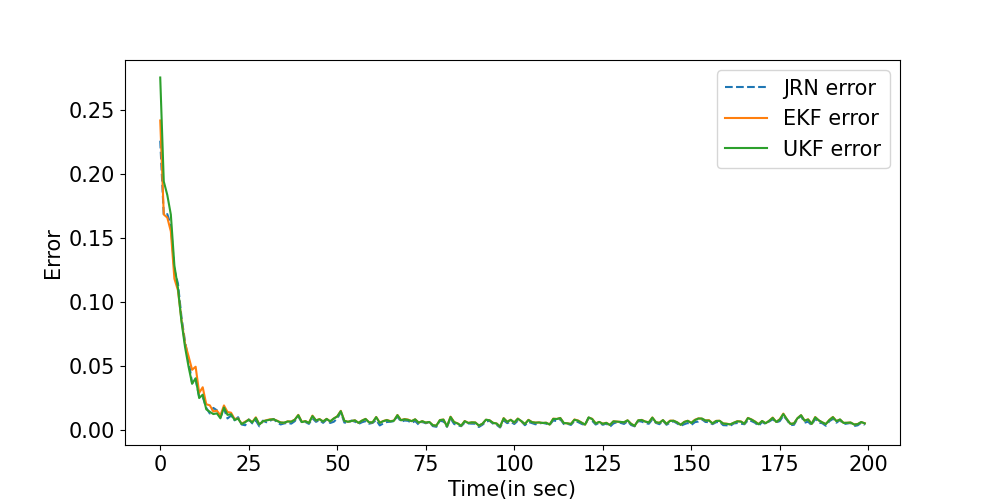}
        \caption{}
        \label{fig:errors_rvp}
    \end{subfigure}
    \caption{Figures \ref{fig:loss_ms}, \ref{fig:loss_dp} and \ref{fig:loss_rvp} show the  training and validation values of JRNs for the mass spring damper system, down pendulum and reversed Van der Pol oscillator respectively.
    Figures  \ref{fig:errors_ms} \ref{fig:errors_dp} and \ref{fig:errors_rvp} show average errors at each time-step for JRN, EKF and UKF state estimates for mass spring damper system, down pendulum and reversed Van der Pol oscillator respectively.}
    \label{fig:nonlinear_loss_comparison}
\end{figure*}

 \begin{figure}
    \centering
    \includegraphics[scale=0.65]{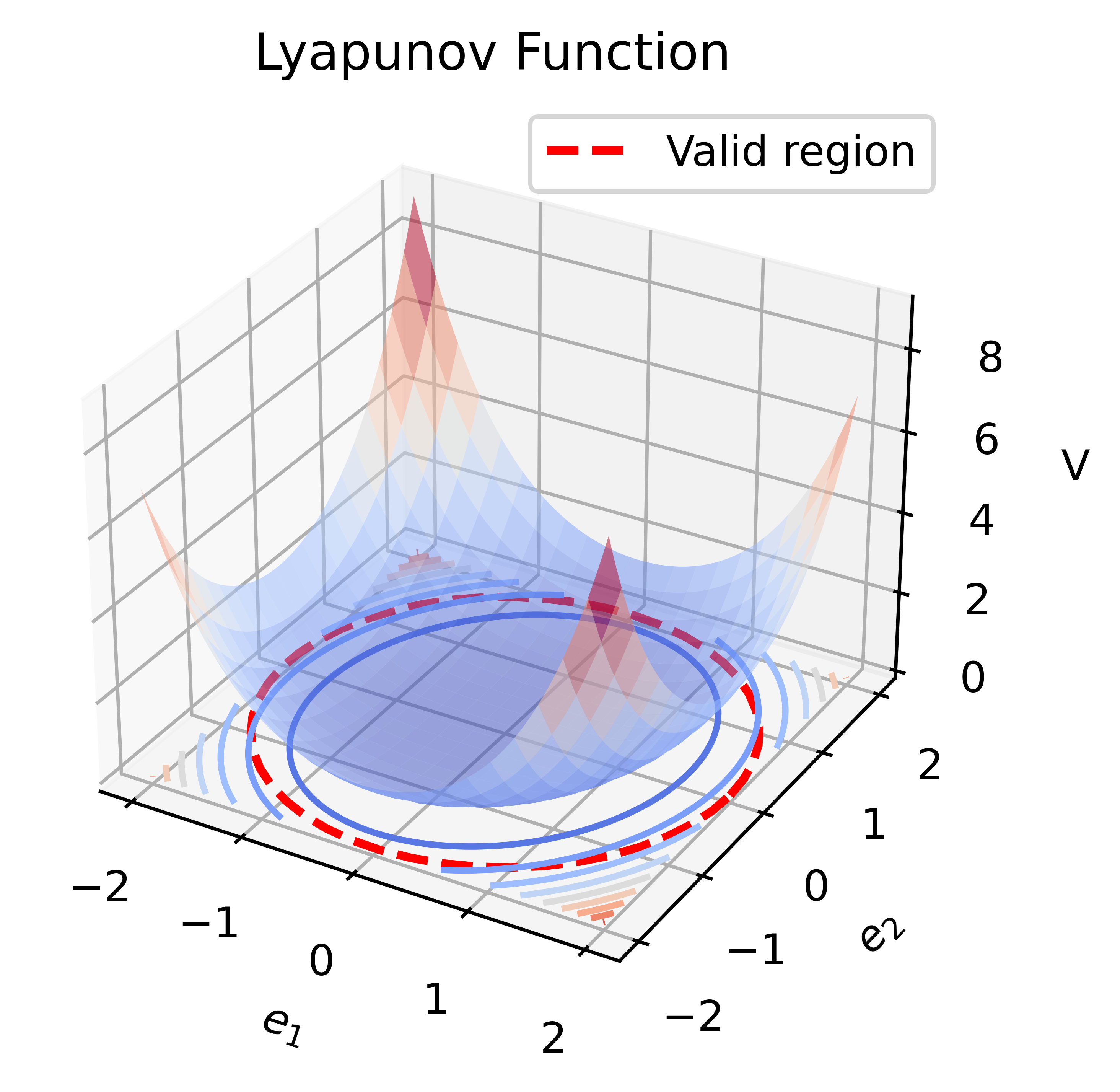}
    \caption{Learned ISS Lyapunov function for a down pendulum.}
    \label{fig:lf-pendulum}
\end{figure}
\begin{table}[b]
\caption{Training hyperparameters}
\begin{center}
    \begin{tabular}{|c|c|c|c|}
        \hline
        \textbf{State space system} &\textbf{Time-steps} & \textbf{Learning rate} & \textbf{Patience} \\
        \hline
        Mass spring & 300 & 0.01 & 10\\
        Down pendulum & 300 & 0.01 & 25\\
        r. Van der Pol & 200 & 0.001 & 25\\
        \hline
    \end{tabular}
    \label{tab:optimization}
    \end{center}
\end{table}

\begin{table}[b]
\caption{Parameters}
\begin{center}
    \begin{tabular}{|c|c|}
        \hline
        \textbf{State space system} &\textbf{Parameters} \\
        \hline
        Mass spring & $m=10$ kg, $b=6$ kg/sec, $k=800$ kg/sec$^2$ \\
        Down pendulum & $m=2$ kg, $b=0.9$ kg/sec, $l=1 m.$\\
        \hline
    \end{tabular}
    \label{tab:parameters}
    \end{center}
\end{table}
\section{Examples}
\label{sec:numerical_results}
In this section, we illustrate the proposed method with 3 discrete-time systems obtained as explained in \ref{sec:data_gen}. The ODEs considered are mass-spring damper system, down pendulum and reversed Van der Pol oscillator. The measurement vector at time-step $t$ for each of the systems is $y^{(t)}=x_1^{(t)}+\nu^{(t)}$ where $x_1(t)$ refers to the position and $\nu^{(t)}$ is the measurement noise at time-step $t$. Important parameters for each of these systems are summarised in Tables \ref{tab:optimization} and \ref{tab:parameters}.A remote Ubuntu server was used for all calculations. The codes used are available on Github at https://github.com/avneetkaur96/JRNs-for-state-estimation.
\begin{table}[b]
\caption{Root mean square error (RMSE) for E(KF), UKF and JRN}
\begin{center}
    \begin{tabular}{|c|c|c|c|c|c|}
        \hline
        \textbf{State space} &\multicolumn{3}{|c|}{\textbf{Estimator}}\\
        \cline{2-4}
        \textbf{system} & \textbf{\textit{E(KF)}} & \textbf{\textit{UKF}} & \textbf{\textit{JRN}}\\
        \hline
        Mass spring & \textbf{0.5132} & - & 0.5268\\
        Down pendulum & 0.1594 & 0.1693 & \textbf{0.1298} \\
        r. Van der Pol & 0.1153 & 0.1158 & \textbf{0.1102} \\
        \hline
    \end{tabular}
    \label{tab:MSE}
    \end{center}
\end{table}

\begin{table}[b]
\caption{Best epoch and training time for JRN}
\begin{center}
    \begin{tabular}{|c|c|c|}
        \hline
        \textbf{State space system} &\textbf{Best Epoch} & \textbf{Training time(in sec)} \\
        \hline
        Mass spring & 135 & 20\\
        Down pendulum & 313 & 101\\
        r. Van der Pol & 96 & 32\\
        \hline
    \end{tabular}
    \label{tab:Epochntraintime}
    \end{center}
\end{table}

The performance of the JRN and the KF for mass-spring system state estimation is summarized in Figures \ref{fig:errors_ms}  and \ref{fig:loss_ms}. As shown in Figure \ref{fig:errors_ms}, both the JRN and KF achieve rapid error reduction within the first few time steps, with the KF maintaining slightly lower error overall as expected due to its optimal nature. The JRN closely tracks the KF’s performance after the initial steps, indicating effective learning of the system dynamics. The difference of estimation in the first few timesteps can be attributed to the fact that the KF is provided with an estimate of the initial condition while the JRN has no information of the initial condition. The training and validation loss curves for the JRN (Figure \ref{fig:loss_ms}) demonstrate fast convergence, with loss stabilizing after approximately 40 epochs. The close alignment of training and validation loss throughout training indicates good generalization and minimal overfitting. The RMSE on the test set is $0.5132$ for the KF and $0.5268$ for the JRN (Table \ref{tab:MSE}, confirming that the JRN provides comparable estimation accuracy. The model was restored to its state at the best epoch for testing. For this system, the origin is asymptotically stable. We then implement the method  for stability of linear systems. Setting  $Q = I$, it  is easy to establish that $P = \begin{bmatrix}
45.504 & -3.5737 \\
-3.5737 & 1.4461
\end{bmatrix}.$ With this ISS-Lyapunov function, by Theorem~\ref{thm:stable}, both the original system and the error system are asymptotically stable. This proves the stability of the NN  estimator error.

As shown in Figures \ref{fig:errors_dp} and \ref{fig:errors_rvp}, all three approaches exhibit a rapid reduction in error within the first 25 time-steps, stabilizing thereafter.JRN achieves the lowest estimation error over time, outperforming both EKF and UKF for the down pendulum. The errors for both systems and for all methods stabilize after approximately $50$ time-steps, with JRN maintaining a consistently lower error throughout for the down pendulum. Figures \ref{fig:loss_dp} and \ref{fig:loss_rvp} demonstrate the convergence of the JRN training process. Both training and validation losses decrease steadily, indicating good generalization and absence of overfitting. The JRN achieves the lowest RMSE compared to the EKF and UKF for both the nonlinear systems, indicating improved accuracy (See Table \ref{tab:MSE}). These results highlight the effectiveness and efficiency of the JRN in nonlinear state estimation tasks. For both nonlinear systems, an ISS Lyapunov function was learned using a neural network and verified by an SMT solver. For the down pendulum, this function is shown in Fig.~\ref{fig:lf-pendulum}, verified by an SMT solver, dReal~\cite{gao2013dreal}, on a $[-2,2] \times [-2,2]$ for the error states. In this case, we used $\alpha_1(\cdot) = \alpha_3(\cdot) = 0.01|\cdot|$ and $\alpha_2(\cdot) = \gamma(\cdot) = 100|\cdot|$. This provides the asymptotic stability for the error dynamics by Theorem~\ref{thm:stable}.

    


\section{Conclusions and future work}
The Jordan recurrent network based estimator achieved results as good as KF for the linear system and  better than EKF and UKF for the chosen nonlinear systems. While the training time is a drawback in case of neural networks, the testing time of JRN-based estimators is much lower than the classical approaches of UKF and EKF. Current work focuses on extending the Jordan architecture to a long short-term neural network \cite{KaurMorris}. 
 
The main contribution of this paper is to  verify the stability of the error dynamics of the RNN-based estimator using an ISS approach, by taking advantage of the structural properties of an unbiased JRN and recent results in learning Lyapunov functions. For linear systems, quadratic Lyapunov equations can be computed analytically for the error dynamics to provide stability. For nonlinear systems, a counter-example guided method was used to learn an ISS Lyapunov function. The linear approach is used near the equilibrium. The effectiveness of the  method has been illustrated with three examples. 

Future work will extend this approach to higher-order systems. Addressing the issue of estimation of unstable systems is also of interest. Another potential future research direction is to consider stability for the case where biases are non-zero, and for networks with long short term memory.


\bibliographystyle{ieeetr}        
\bibliography{root.bib}
\end{document}